\theoremstyle{plain}
\newtheorem{statement}{Statement}
\newtheorem{lemma}{Lemma}
\numberwithin{lemma}{section}
\numberwithin{corollary}{section}
\newtheorem{theorem}{Theorem}
\numberwithin{theorem}{section}
\numberwithin{proposition}{section}
\theoremstyle{definition}
\newtheorem{definition}{Definition}
\numberwithin{definition}{section}
\newtheorem{remark}{Remark}
\numberwithin{remark}{section}
\newtheorem{example}{Example}
\numberwithin{example}{section}
\title{The structure of the algebra of weak Jacobi forms for the root system $F_4$}
\author{D. Adler
\footnote 
{The author is supported by Laboratory of Mirror Symmetry NRU HSE, RF Government grant, number 14.641.31.0001.}}
\date{\today}
\begin{document}\maketitle

%-------------------------------------------------------------------------------
\begin{abstract}
We prove the polynomiality of the bigraded ring $J_{*,*}^{w, W}(F_4)$ of weak Jacobi forms for the root system $F_4$ which are invariant with respect to the corresponding Weyl group. This work is a continuation of the joint article with V.A. Gritsenko, where  the structure of algebras of the weak Jacobi forms related to the root systems of $D_n$ type for $2\leqslant n \leqslant 8$ was studied.

\smallskip

Key words: Jacobi forms, Invariant theory.

2010 MSC: 11F50, 16W22
\end{abstract}

\section{Introduction}

The Chevalley theorem states that the algebra of polynomials which are invariant under the action of a finite group is polynomial if and only if this group is generated by (pseudo)reflections (see \cite{Ch}). The generalization of this result to the case of complex crystallographic Coxeter groups was studied in \cite{BS}, \cite{Lo1}, \cite{Lo2} and \cite{KP}. An analogue of the Chevalley theorem for weak Jacobi forms was obtained by K. Wirthm\"uller in \cite{Wirt} where the structure of the algebras was studied for all root systems, except $E_8$.
In the case of the root system $E_8$, as it has been proven by H. Wang in \cite{Wa}, the corresponding algebra of weak Jacobi forms is not polynomial.

The Wirthm\"uller's proof does not contain a direct construction for all generators of the corresponding algebras, but explicit formulas of them could be useful in applications. A one of such application is the computation of flat coordinates on some Frobenius manifolds (see \cite{Sa2}, \cite{Sa3}, \cite[\S 4]{Du}, \cite{Sat} and \cite{Ber1,Ber2}). For example, M. Bertola has considered the cases of the root systems $A_n$, $B_n$ and $G_2$ in papers \cite{Ber1} and \cite{Ber2}, and I. Satake has studied the case of the root system $E_6$ in \cite{Sat}.

The main goal of the present paper is to prove an analogue of the Chevalley theorem for the bigraded algebra of the weak Jacobi forms which are invariant under the action of the Weyl group for the lattice generated by the root system $F_4$. More precisely, we prove that this algebra is polynomial over the ring of modular forms. In  the joint paper with V. Gritsenko \cite{AG} we have proven the same result for the case of root systems $D_n$ with $2\leqslant n\leqslant 8$, and have explicitly constructed the generators of the corresponding algebras. In this paper, we make use of these generators for the case of $D_4$ and provide the direct construction of generators for the case $F_4$.

The paper is organized as follows. In \S 2 we give necessary definitions of root systems, Jacobi forms, and their examples. In \S 3 we introduce notations for  the coefficients of Fourier expansions of Jacobi forms which are invariant under the action of Weyl group. In \S 4 we state the main theorem of this paper and construct the generators of the algebra of weak Jacobi forms for $F_4$. In \S 5 we prove that constructed forms are algebraically independent over the ring of modular forms and generate the corresponding algebra of weak Jacobi forms.

The author is grateful to O. Schwarzman for the original formulation of the problem, helpful discussions of results and useful remarks to the paper, and to V. Gritsenko for supervision and very stimulating conversations.

\section{Definitions and constructions}

\subsection{Root systems}

In this paper we deal with the root systems of $D_n$ and $F_4$ type. Let us define them, following \cite{Bur}.

\begin{definition}\label{sysDn}
Let $\varepsilon_1,\ldots, \varepsilon_n$ be the standard orthonormal basis of $\mathbb{Z}^n$. Then $\alpha_1=\varepsilon_1 - \varepsilon_2, \ldots, \alpha_{n-1}=\varepsilon_{n-1}-\varepsilon_n$ and $\alpha_n=\varepsilon_{n-1}+\varepsilon_n$ form a basis of the root system $D_n$ and generate the lattice
$$D_n=\{(x_1,\ldots, x_n)\in \mathbb{Z}^n \,|\, \sum_{i=1}^n x_i \equiv 0 \mod{2}\}.$$
\end{definition}

By definition, this lattice is even, that is for any vector $\lambda\in D_n$ the square of its length $(\lambda, \lambda) \in 2\mathbb{Z}$.

Hereinafter, for simplicity, we use the same notation for the root system $D_n$ or $F_4$ and for the corresponding lattice.
It should be noted that it is possible to define the root system $D_n$ for $n=2$. In this case we obtain the reducible root system isomorphic to $A_1\oplus A_1$, where $A_1$ is the 1-dimensional root system with the basis $\varepsilon_1-\varepsilon_2\in \mathbb{R}^2$, and the corresponding lattice is
$$A_1=\{(x_1,  x_2)\in \mathbb{Z}^2 \,|\, x_1+x_2=0 \}.$$

The Weyl group for $D_n$ acts by permutations and by sign changes of even number of coordinates of vectors $\lambda=(x_1,\ldots, x_n)\in D_n$ in the standard basis. 

\begin{definition}\label{sysCn}
The vectors $\alpha_1=\varepsilon_1 - \varepsilon_2, \ldots, \alpha_{n-1}=\varepsilon_{n-1}-\varepsilon_n$ and $\alpha_n=2\varepsilon_n$ for $n\geqslant 3$ form a basis of the root system $C_n$. The lattice generated by the root system $C_n$ is the lattice $D_n$.
\end{definition}

For $n\neq 4$ the Weyl group $W(C_n)$ acts by permutations and by sign changes of any number of coordinates of vectors belonging to $D_n$. This group is also called the full orthogonal group $O(D_n)$ of the lattice $D_n$; it contains the Weyl group $W(D_n)$ as a subgroup of index 2.

For $n=4$ the Weyl group $W(C_n)$ acts the same way. Following \cite{AG}, we denote it $O\rq{}(D_4)$, because it is not equal to the full orthogonal group $O(D_4)$, and the exceptional root system $F_4$ appears.

\begin{definition}\label{sysF4}
The vectors
$$\tilde{\alpha}_1 =\varepsilon_2-\varepsilon_3, \quad \tilde{\alpha}_2 = \varepsilon_3-\varepsilon_4, \quad \tilde{\alpha}_3=\varepsilon_4, \quad \tilde{\alpha}_4=\frac{1}{2}(\varepsilon_1-\varepsilon_2-\varepsilon_3-\varepsilon_4)$$
form a basis of the root system $F_4$. 
The lattice generated by this root system is odd and contains the lattice $D_4$ as a sublattice of index 2.
\end{definition}

The Weyl group $W(F_4)$ corresponds to the full orthogonal group $O(D_4)$ and it is equal to the semidirect product of the symmetric group $S_3$ and the Weyl group $W(D_4)$. The group $W(F_4)$ is generated by reflections in $\pm \varepsilon_i$, $\pm \varepsilon_i\pm \varepsilon_j$ and $\frac{1}{2}(\pm \varepsilon_1\pm\varepsilon_2\pm\varepsilon_3\pm\varepsilon_4).$

\begin{remark}\label{A1n}
Let $L$ be a lattice with inner product $(\cdot, \cdot)$. Given an integer $m$, we denote by $L(m)$ the same lattice $L$, but with the inner product $m(\cdot, \cdot)$.
\end{remark}

\begin{example}
By definition, the lattice $A_1$ consists of vectors $(x, -x)$ with $x\in \mathbb{Z}$. The square of the length of any such vector is equal to $2x^2$. Hence, $A_1\simeq \mathbb{Z}(2)$. By Definition \ref{sysDn}, $D_n$ is a sublattice of $\mathbb{Z}^n$ of index 2. Therefore, 
$$
D_n(2)< \mathbb{Z}(2)^{\oplus n}\simeq A_1^{\oplus n}.
$$
\end{example}

\begin{example}\label{F4D4}
Let us consider the lattice $F_4(2)$. One can show by constructing an explicit isomorphism that this lattice is isomorphic to the lattice $D_4$ with standard inner product. 

As it was mentioned above, the Weyl group $W(F_4)$ is the orthogonal group of the lattice $D_4$. Thus, there are the following correspondences between the root systems and the pairs (lattice, group)
$$ D_4 \leftrightarrow (D_4, W(D_4)), \quad C_4 \leftrightarrow (D_4, O'(D_4)), \quad F_4 \leftrightarrow (D_4, O(D_4)).$$
\end{example}

\subsection{Jacobi forms}

For any positive definite lattice $L$ one can introduce the notion of Jacobi forms associated with this lattice. 

\begin{definition}\label{Jacobi}
Let $L$ be a positive definite lattice with the inner product $(\cdot\, , \cdot)$, let $\tau$ be a variable in the upper half-plane $\mathcal{H}$ and let $\mathfrak{z}=(z_1,\ldots, z_n)\in L\otimes\mathbb{C}$. Then \textit{a weak Jacobi form of weight $k\in\mathbb{Z}$ and index $m \in \mathbb{Z}$ for the lattice $L$} is
a holomorphic function $\varphi_{k, m}: \mathcal{H}\times(L\otimes\mathbb{C})\to\mathbb{C}$ which satisfies the functional equations
$$
\begin{aligned}
\varphi_{k, m}\left(\frac{a\tau+b}{c\tau+d},\frac{\mathfrak{z}}{c\tau+d}
\right)&=(c\tau+d)^ke^{\pi im\frac{c(\mathfrak{z},\mathfrak{z})}{c\tau
+d}}\varphi_{k, m}(\tau,\mathfrak{z})\;\text{for}\; 
\left(\begin{smallmatrix}
a & b\\
c & d
\end{smallmatrix}\right)\in SL_2(\mathbb{Z}),\\
\varphi_{k, m}(\tau, \mathfrak{z}+\lambda\tau+\mu)
&=e^{-2\pi im(\lambda,\,\mathfrak{z})-\pi im(\lambda,\,\lambda)\tau}\varphi_{k, m}(\tau,\mathfrak{z}) \;\text{for all}\; \lambda, \mu\in L
\end{aligned}
$$
and such that $\varphi_{k, m}(\tau,\mathfrak{z})$ has a Fourier expansion
$$\varphi_{k, m}(\tau,\mathfrak{z})=\sum_{l\in L^{\vee}}\sum_{n\geqslant 0} a(n, l)q^n \zeta^l.$$
\end{definition}

Hereinafter $L^{\vee}=\{m\in L\otimes \mathbb{Q} \,|\, \forall l\in L: (m,l)\in \mathbb{Z}\}$ is the dual lattice of $L$, $q=e^{2\pi i\tau}$, $\zeta^l=e^{2\pi i (\mathfrak{z},\, l)}$ for all $l$ from $L$ or $L^{\vee}$.

The first condition is called \textit{the modular equation}, the second one is called \textit{the quasiperiodicity equation}.
The set of all weak Jacobi forms has the natural structure of the bigraded ring 
$$J_{*,*}^{w}(L)=\bigoplus_{k,m}J_{k,m}^w(L).$$

\begin{remark}
If the function $\varphi_{k, m}(\tau,\mathfrak{z})$ satisfies the quasiperiodicity and the modular equations and has a Fourier expansion
$$\varphi_{k, m}(\tau,\mathfrak{z})=\sum_{l\in L^{\vee}}\sum_{n} a(n, l)q^n \zeta^l$$
where $a(n,l)\neq 0$ $\Rightarrow$ $2nm\geqslant (l, l)$, then it is called a \textit{holomorphic Jacobi form}. If, moreover, $\varphi_{k, m}(\tau,\mathfrak{z})$ satisfies the stronger condition $a(n,l)\neq 0$ $\Rightarrow$ $2nm > (l, l)$, then it is called a \textit{cusp Jacobi form}.
By these definitions,
$$J_{*,*}^c(L)\subset J_{*,*}(L)\subset J_{*,*}^w(L)$$
where $J_{*,*}^c(L)$ and $J_{*,*}(L)$ are the sets of cusp and holomorhic Jacobi forms, respectively. 
Let us note here that the holomorphic Jacobi forms are often called just \text{Jacobi forms}, after \cite{EZ}. However, to avoid confusion, in this paper we use the name Jacobi form only when we consider a form of general type, that is when it does not matter whether this form is weak, or holomorhic or cusp. Also it should be noted that the main theorem \ref{MainTheorem} of this paper does not apply to the cases of holomorphic and cusp Jacobi forms.
\end{remark}

\begin{remark}\label{index}
Let $\varphi(\tau,\mathfrak{z})$ be a Jacobi form of weight $k$ and index $m$ for the lattice $L$. Then, by definitions, $\varphi(\tau,\mathfrak{z})$ is also a Jacobi form of the same type (weak, holomorhic or cusp), of weight $k$ and of index 1 for the lattice $L(m)$ (see Remark \ref{A1n}).
\end{remark}

\begin{lemma}\label{oddLattice}
Let $L$ be an odd positive definite lattice. Then the index of any Jacobi form for this lattice is even.
\end{lemma}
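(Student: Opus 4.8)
The plan is to extract the evenness of the index directly from the quasiperiodicity equation, applied along a lattice vector of odd norm, and then to compare the resulting $q$-expansions. Since $L$ is odd, there is a vector $\lambda_0\in L$ with $(\lambda_0,\lambda_0)$ odd; this is the vector I will translate by. Let $\varphi_{k,m}$ be a nonzero Jacobi form of index $m$ for $L$ (the zero form carries every index, so there is nothing to prove in that case).

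First I would specialize the quasiperiodicity equation to $\lambda=\lambda_0$ and $\mu=0$, obtaining
$$\varphi_{k,m}(\tau,\mathfrak{z}+\lambda_0\tau)=e^{-2\pi i m(\lambda_0,\mathfrak{z})}\,q^{-m(\lambda_0,\lambda_0)/2}\,\varphi_{k,m}(\tau,\mathfrak{z}),$$
using $e^{-\pi i m(\lambda_0,\lambda_0)\tau}=q^{-m(\lambda_0,\lambda_0)/2}$. Substituting the Fourier expansion $\varphi_{k,m}=\sum_{l\in L^\vee}\sum_{n\geqslant 0}a(n,l)q^n\zeta^l$ into both sides, the left-hand side becomes $\sum_{l,n}a(n,l)\,q^{\,n+(\lambda_0,l)}\zeta^l$, while the right-hand side becomes $\sum_{l,n}a(n,l)\,q^{\,n-m(\lambda_0,\lambda_0)/2}\,\zeta^{\,l-m\lambda_0}$.

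Next I would compare these two expansions coefficient by coefficient in the characters $\zeta^l$. Since $\varphi_{k,m}$ is $L$-periodic, its expansion in the $\zeta^l$ is unique, so for each fixed $l\in L^\vee$ I equate the coefficient of $\zeta^l$, reindexing the right-hand side via $l\mapsto l+m\lambda_0$ (here $m\lambda_0\in L\subseteq L^\vee$), which yields
$$\sum_{n}a(n,l)\,q^{\,n+(\lambda_0,l)}=\sum_{n}a(n,l+m\lambda_0)\,q^{\,n-m(\lambda_0,\lambda_0)/2}.$$
The key observation is then a comparison of the $q$-exponents: on the left they all lie in $\mathbb{Z}$, because $(\lambda_0,l)\in\mathbb{Z}$ by the definition of $L^\vee$ and $n\in\mathbb{Z}_{\geqslant 0}$, whereas on the right they all lie in $\mathbb{Z}-m(\lambda_0,\lambda_0)/2$. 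If $m$ were odd, then $m(\lambda_0,\lambda_0)/2$ would be a genuine half-integer, so the two sides would be power series supported on disjoint sets of exponents; by uniqueness of $q$-expansions this forces $a(n,l)=0$ and $a(n,l+m\lambda_0)=0$ for all $n$, and since $l$ was arbitrary, $\varphi_{k,m}\equiv 0$, contradicting our choice. Hence $m$ is even.

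The computation is routine; the only point requiring care is the bookkeeping of the $q$-exponents, and the sole conceptual step is recognizing that the defining requirement that the Fourier expansion run over \emph{integer} powers of $q$ (that is, $n\in\mathbb{Z}_{\geqslant 0}$) is exactly what interacts with the half-integer shift $m(\lambda_0,\lambda_0)/2$ to force $m$ even. I do not expect any deep obstacle here.
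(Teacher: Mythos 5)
Your proof is correct, but it takes a genuinely different route from the paper's. The paper never expands $\varphi_{k,m}$ in a Fourier series at all: it substitutes $\tau+1$ for $\tau$ in the quasiperiodicity equation, then computes $\varphi_{k,m}(\tau,\mathfrak{z}+\lambda\tau+\mu)$ a second way using the modular equation for $\left(\begin{smallmatrix}1&1\\0&1\end{smallmatrix}\right)$ together with periodicity under $\mathfrak{z}\mapsto\mathfrak{z}+(\lambda+\mu)$; comparing the two resulting automorphy factors gives $e^{-\pi i m(\lambda,\lambda)}=1$, i.e.\ $m(\lambda,\lambda)\in 2\mathbb{Z}$ for \emph{every} $\lambda\in L$, which for an odd lattice forces $m$ even. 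You instead fix a single odd-norm vector $\lambda_0$, insert the Fourier expansion into the quasiperiodicity equation along $\lambda_0$, and observe that the half-integer shift $m(\lambda_0,\lambda_0)/2$ makes the $q$-supports of the two sides disjoint when $m$ is odd, forcing $\varphi_{k,m}\equiv 0$. The underlying mechanism is the same — the factor $q^{-m(\lambda,\lambda)/2}$ is incompatible with $\tau\mapsto\tau+1$ periodicity, which is exactly what guarantees an integer-power $q$-expansion — but your execution lives at the level of Fourier coefficients, whereas the paper's is purely at the level of functional equations. The paper's version is slightly slicker in that it avoids any appeal to uniqueness of expansions in half-integer powers of $q$ (your appeal is harmless: both sides are Fourier series of a function of period $2$ in $\mathrm{Re}\,\tau$, so uniqueness does hold, but it is an extra thing to justify). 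On the other hand, you are more careful on one point: both arguments require $\varphi_{k,m}\not\equiv 0$ (the zero form satisfies the defining equations for every index), and you exclude that case explicitly, while the paper leaves it implicit when it cancels $\varphi_{k,m}$ from both sides of its final identity.
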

\begin{proof}
Let us consider an arbitrary Jacobi form $\varphi_{k, m}$ of weight $k$ and index $m$ for $L$.
After substitution of $\tau +1$ instead of $\tau$ in the quasiperiodicity equation we obtain 
$$\varphi_{k, m}(\tau+1, \mathfrak{z}+\lambda(\tau+1)+\mu)
=e^{-2\pi im(\lambda,\,\mathfrak{z})-\pi im(\lambda,\,\lambda)(\tau+1)}\varphi_{k, m}(\tau+1,\mathfrak{z}).$$
The modular equation and the periodicity under shifts $z\mapsto z+\lambda$ give
$$\varphi_{k, m}(\tau, \mathfrak{z}+\lambda\tau+\mu)=\varphi_{k, m}(\tau, \mathfrak{z}+\lambda\tau+(\lambda+\mu))=$$$$
=e^{-2\pi im(\lambda,\,\mathfrak{z})-\pi im(\lambda,\,\lambda)(\tau+1)}\varphi_{k, m}(\tau,\mathfrak{z})=$$
$$=e^{-\pi im(\lambda, \lambda)}e^{-2\pi im(\lambda,\,\mathfrak{z})-\pi im(\lambda,\,\lambda)\tau}\varphi_{k, m}(\tau,\mathfrak{z}).$$
Comparing this equation with the quasiperiodicity equation we conclude that $m(\lambda, \lambda)\in 2\mathbb{Z}$ for all $\lambda \in L$.
Therefore, all Jacobi forms for the odd lattice $L$ have even indices. Or, equivalently, as it follows from Remark \ref{index}, it is possible to consider Jacobi forms of an arbitrary integer weight, but for the even lattice $L(2)$.
\end{proof}

\begin{definition}
Let $G$ be a subgroup of the full orthogonal group $O(L)$ of the positive definite lattice $L$. The Jacobi form $\varphi_{k, m}(\tau,\mathfrak{z})$ for this lattice is called a $G$-invariant Jacobi form if for all $g\in G$ 
$$ \varphi_{k, m}(\tau, g(\mathfrak{z}))=\varphi_{k, m}(\tau, \mathfrak{z}).$$
\end{definition}

In this paper we consider weak $G$-invariant Jacobi forms for positive definite lattices. We denote the bigraded algebra of the corresponding forms
$$J_{*,*}^{w, G}(L)=\bigoplus_{k,m}J_{k,m}^{w, G}(L).$$
Our main goal is to prove that the algebra of weak $W(F_4)$-invariant Jacobi forms is polynomial. As it was mentioned above the lattice $F_4$ is odd. Hence, by Lemma \ref{oddLattice}, we consider Jacobi forms for the lattice $F_4(2)$. However, by Example \ref{F4D4}, 
$$J_{*, *}^{w, W}(F_4(2))\simeq J_{*, *}^{w, O}(D_4).$$
Therefore, we study the structure of the algebra of weak $O(D_4)$-invariant Jacobi forms.

Any $O(D_4)$-invariant Jacobi form is $W(D_4)$-invariant because $W(D_4)$ is a subgroup of $O(D_4)$. So, the generators of  $J_{*, *}^{w, O}(D_4)$ are $W(F_4)/W(D_4)\simeq S_3$-invariant polynomials in generators of the algebra of weak $W(D_4)$-invariant Jacobi forms (the other conditions of Definition \ref{Jacobi} are satisfied automatically).

\subsection{Examples of Jacobi forms}

In this section we give the main examples of Jacobi forms and, following \cite{AG}, generators of $J_{*, *}^{w, W}(D_4)$.

\begin{example}\label{Theta}
One of the most important functions in the theory of Jacobi forms is the odd Jacobi theta-function (see e.g. \cite{M}):
$$
\vartheta(\tau, z) =-i\vartheta_{11}(\tau,z)= q^{\frac{1}{8}} \sum_{n\in \mathbb{Z}} (-1)^n 
q^{\frac{n(n+1)}{2}} \zeta^{n+\frac{1}{2}} = $$
$$=
-q^{\frac{1}{8}}\zeta^{-\frac{1}{2}}
\prod_{n = 1}^{\infty} (1-q^{n-1} \zeta)(1-q^n\zeta^{-1})(1-q^n).
$$
Formally, $\vartheta(\tau,z)$ is not a Jacobi form (if we consider forms of integer weight and without characters), because
\begin{gather*}
   \label{eq11}
   \vartheta(\tau,z+\lambda\tau+\mu)=(-1)^{\lambda+\mu}e^{-\pi i(\lambda^2\tau+2\lambda z)}
   \vartheta(\tau,z),\quad
   \lambda, \mu \in \mathbb{Z},
   \\
   \label{eq12}
\vartheta\left(\frac{-1}{\tau},\frac{z}{\tau}\right)
=-i\sqrt{i\tau}
   e^{\pi i\frac{z^2}{\tau}}\vartheta(\tau,z).
\end{gather*}
However, combining it with the Dedekind $\eta$-function 
$$\eta(\tau)=q^{\frac{1}{24}}\prod_{n\geqslant 1}(1-q^n)$$
one can construct many examples of Jacobi forms. 
For example, the weak Jacobi form of weight $-2$ and index 1 from \cite{EZ} can be represented as
$$\phi_{-2,1}(\tau, z)=\frac{\vartheta(\tau, z)\vartheta(\tau, -z)}{\eta^6(\tau)} 
=(\zeta -2 +\zeta^{-1}) +q\cdot(\ldots) \in J_{-2,1}^{w, W}(A_1),$$
because $A_1\simeq\mathbb{Z}(2)$, and the Weyl group acts on $A_1$ by the sign changes (see Remark \ref{A1n}).
\end{example}

\begin{definition}\label{DifOp}
Another method to construct Jacobi forms is a modular differential operator. More precisely, it is the operator $H_k$ acting on Jacobi forms of weight $k$ and index $m$ for a lattice $L$ of rank $n_0$ with the inner product $(\cdot, \cdot)$ by
$$
H^{(L)}_k(\varphi_{k,m})(\tau, \mathfrak{z}) = $$
$$
=\frac{1}{2\pi i} \frac{\partial \varphi_{k,m}}{\partial \tau}(\tau, 
\mathfrak{z})+\frac{1}{8\pi^2 m}\left(\frac{\partial}{\partial 
\mathfrak{z}}, \frac{\partial}{\partial \mathfrak{z}}\right)
\varphi_{k,m}(\tau, \mathfrak{z}) +  (2k-n_0)G_2(\tau)\varphi_{k,m}(\tau, 
\mathfrak{z})=$$
$$= \sum_{n=0}^{\infty}\sum_{l \in L^{\vee}} \left(n-\frac{1}{2m}(l, l)
\right) a(n, l) q^n \zeta^{l}+  (2k-n_0)G_2(\tau)\varphi_{k,m}(\tau, 
\mathfrak{z}),$$
where $G_2(\tau)=-\frac{1}{24}+\sum_{n\geqslant 1} \sigma(n) q^n$ is the quasimodular Eisenstein series of weight 2 and $$\sigma_k(n)=\sum_{d\vert n}d^k.$$
\end{definition}

One can check that this operator transforms weak (holomorphic, cusp) Jacobi forms of weight $k$ and index $m$ into weak (holomorphic, cusp) Jacobi forms of weight $k+2$ and the same index $m$. For details see \cite{GrJ}.

\begin{example}\label{weight0}
Using this operator one can construct the weak Jacobi form of weight 0 and index 1 from \cite{EZ}:
$$\phi_{0,1}(\tau, z)=H_{-2}^{A_1}(\phi_{-2,1}(\tau, z))
=(\zeta +10 +\zeta^{-1}) +q\cdot(\ldots) \in J_{0,1}^{w, W}(A_1).$$
As it was proven in \cite{EZ},
$$J^{w, W}_{*,*}(A_1)=M_*[\phi_{0,1}, \phi_{-2,1}],$$
where $$M_*=\bigoplus_{k\geqslant 0} M_{2k}(SL_2(\mathbb{Z}))=\mathbb{C}[E_4, E_6]$$
is a ring of modular forms. This ring is generated by two Eisenstein series 
\begin{equation}\label{Eis}
E_4(\tau)=1+240\sum_{n\geqslant 1}\sigma_3(n)q^n,\quad 
E_6(\tau)=1-504\sum_{n\geqslant 1}\sigma_5(n)q^n.
\end{equation}

\end{example}

\begin{example}\label{index2}
Let $\varphi_1(\tau, \mathfrak{z}_1)$ be a weak Jacobi form of weight $k_1$ and index $m$ for the lattice $L_1$, and let $\varphi_2(\tau, \mathfrak{z}_2)$ be a weak Jacobi form of weight $k_2$ and the same index $m$ for the lattice $L_2$. Then for the lattice $L=L_1\oplus L_2$ (here we consider the orthogonal sum of the lattices) the form
$$\varphi(\tau, \mathfrak{z})=\varphi_1(\tau, \mathfrak{z}_1)\varphi_2(\tau, \mathfrak{z}_2)$$
is a weak Jacobi form of weight $k_1+k_2$ and index $m$. The averaging over a subgroup $G$ of the $O(L)$ group gives $G$-invariant Jacobi forms. 

For example, if $L=A_1^{\oplus n}$, then for any $k$ the form
\begin{multline*}
\varphi_{-2k,1}^{L}=\frac{1}{k!(n-k)!}\sum_{\sigma\in S_n} \phi_{-2, 1}(\tau, z_{\sigma(1)})\ldots\phi_{-2, 1}(\tau, z_{\sigma(k)}) \times \\
\times\phi_{0, 1}(\tau, z_{\sigma(k+1)})\ldots\phi_{0, 1}(\tau, z_{\sigma(n)})
\end{multline*}
is a weak Jacobi form of weight $-2k$ and index 1 for $(A_1)^{\oplus n}$. Here $\phi_{-2, 1}$ and $\phi_{0, 1}$ are the same as in Examples \ref{Theta} and \ref{weight0}.
By Remark \ref{A1n}, all such forms $\varphi_{-2k,1}^{L}$ are weak $O(D_n)$-invariant (or $O\rq{}(D_4)$-invariant, if $n=4$) Jacobi forms of weight $-2k$ and index 2.

The case of holomorphic or cusp Jacobi forms is practically the same, but we do not consider it in this paper.
\end{example}

\begin{example}\label{ThetaD4}
In \cite[Example 1.8]{CG}, the functions
$$\vartheta_{D_4}(\tau, \mathfrak{z})=\vartheta(\tau, z_1)\cdot\ldots\cdot\vartheta(\tau, z_4),$$
$$\vartheta_{D_4}^{(2)}(\tau, \mathfrak{z})=\vartheta\left(\tau,\frac{-z_1+z_2+z_3+z_4}{2}\right)\vartheta\left(\tau, \frac{z_1-z_2+z_3+z_4}{2}\right)\times$$ $$\times\vartheta\left(\tau, \frac{z_1+z_2-z_3+z_4}{2}\right)\vartheta\left(\tau, \frac{z_1+z_2+z_3-z_4}{2}\right),$$
$$\vartheta_{D_4}^{(3)}(\tau, \mathfrak{z})=\vartheta\left(\tau,\frac{z_1+z_2+z_3+z_4}{2}\right)\vartheta\left(\tau, \frac{z_1-z_2-z_3+z_4}{2}\right)\times$$ $$\times\vartheta\left(\tau, \frac{z_1+z_2-z_3-z_4}{2}\right)\vartheta\left(\tau, \frac{z_1-z_2+z_3-z_4}{2}\right)$$
have been introduced for the lattice $D_4$.
These functions are not Jacobi forms in the sense of Definition \ref{Jacobi}, because they transform with some multiplicative characters under the action of $SL_2(\mathbb{Z})$ and the shifts by elements of the lattice $D_4$. However, like in  Example \ref{Theta}, the use of suitable power of the Dedekind $\eta$-function makes possible to get rid of these characters and to obtain weak $W(D_4)$-invariant Jacobi forms of weight $-4$ and index 1:
$$\omega_{-4, 1}(\tau, \mathfrak{z}) =\frac{\vartheta_{D_4}(\tau, \mathfrak{z})}{\eta^{12}(\tau)}=\frac{\vartheta(\tau, z_1)\cdot\ldots\cdot\vartheta(\tau, z_4)}{\eta^{12}(\tau)},$$
$$\varphi_{-4,1}^{(2)}(\tau, \mathfrak{z})=\frac{\vartheta^{(2)}(\tau, \mathfrak{z})}{\eta^{12}(\tau)} ~~ \text{and} ~~ \varphi_{-4,1}^{(3)}(\tau, \mathfrak{z})=\frac{\vartheta^{(3)}(\tau, \mathfrak{z})}{\eta^{12}(\tau)}.$$
The form $\omega_{-4, 1}$ vanishes at each $z_i\in\mathbb{Z}\oplus\mathbb{Z}\tau$. Also this form is anti-invariant under sign changes of odd number of $\mathfrak{z}$-coordinates. The other two forms are neither invariant nor anti-invariant, but one can check that $\omega_{-4,1} = \varphi_{-4,1}^{(2)}+\varphi_{-4,1}^{(3)}$, and that  $\varphi_{-4,1}=\varphi_{-4,1}^{(2)} - \varphi_{-4,1}^{(3)}$ is invariant under such transformations.
\end{example}

\begin{remark}
By analogy with construction of $\omega_{-4, 1}(\tau, \mathfrak{z})$, for any lattice $D_n$ the form
$$\omega_{-n, 1}(\tau, \mathfrak{z}) =\frac{\vartheta(\tau, z_1)\cdot\ldots\cdot\vartheta(\tau, z_n)}{\eta^{3n}(\tau)}$$
is a weak Jacobi form of weight $-n$ and index $1$, which is anti-invariant under sign changes of odd number of $\mathfrak{z}$-coordinates, and its divisor consists of $z_i\in\mathbb{Z}\oplus\mathbb{Z}\tau$.
\end{remark}

In \cite{AG}, the author of this paper and V. Gritsenko proved that set of all $W(D_n)$-invariant weak Jacobi forms for the lattice $D_n$ with $3\leqslant n\leqslant 8$ has the structure of a polynomial algebra with $n+1$ generators over the ring of modular forms. Moreover, these generators were constructed in an explicit way. Here we need this result only in the case $n=4$. 

\begin{theorem}\label{TheoremD4}
The bigraded algebra of weak $W(D_4)$-invariant Jacobi forms for the lattice $D_4$ is polynomial
$$J^{w, W}_{*,*}(D_4)=M_*[\varphi_{0,1}, \varphi_{-2,1}, \varphi_{-4,1}, \varphi_{-6,2}, \omega_{-4,1}].$$
The form $\omega_{-4,1}$ is anti-invariant under sign changes of odd number of $\mathfrak{z}$-coordinates, and the other forms are invariant. Moreover,  
$$J^{w, W}_{*,*}(D_3)=M_*\left[\varphi_{0,1}\bigg|_{z_4=0}, \varphi_{-2,1}\bigg|_{z_4=0}, \varphi_{-4,1}\bigg|_{z_4=0}, \omega_{-3,1}\right].$$
\end{theorem}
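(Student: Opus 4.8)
The plan is to combine Wirthm\"uller's analogue of Chevalley's theorem (applicable here since $D_4\neq E_8$) with an explicit matching of generators. Wirthm\"uller's result guarantees that $B:=J^{w,W}_{*,*}(D_4)$ is a free polynomial algebra over $M_*=\mathbb{C}[E_4,E_6]$ on $\dim D_4+1=5$ bihomogeneous generators. The general structure theory forces their weights to be the negatives of the degrees $\{2,4,6,4\}$ of the basic $W(D_4)$-invariants together with one weight-$0$ generator, while the indices are governed by the finer (affine) data, which produces one generator of index $2$ and the rest of index $1$; thus the admissible bidegrees (weight, index) are $(0,1),(-2,1),(-4,1),(-4,1),(-6,2)$. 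Consequently it suffices to prove: (i) the five explicit forms $\varphi_{0,1},\varphi_{-2,1},\varphi_{-4,1},\varphi_{-6,2},\omega_{-4,1}$ are honest weak $W(D_4)$-invariant Jacobi forms lying in exactly these bidegrees; and (ii) they form a system of free generators of $B$ over $M_*$.

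For step (i) I would verify each construction directly. The forms $\varphi_{0,1},\varphi_{-2,1}$ are symmetrized products of $\phi_{0,1}(\tau,z_i)$ and $\phi_{-2,1}(\tau,z_i)$ as in Example \ref{index2}, hence are weak Jacobi forms of index $1$ that are invariant under \emph{all} sign changes; the theta-quotients $\varphi_{-4,1}$ and $\omega_{-4,1}$ from Example \ref{ThetaD4} are weak Jacobi forms once the multiplier characters of the $\vartheta$-products are cancelled by the factor $\eta^{-12}$, with $\varphi_{-4,1}$ invariant and $\omega_{-4,1}$ anti-invariant under sign changes of an odd number of coordinates (hence still $W(D_4)$-invariant, as $W(D_4)$ contains only even sign changes); and $\varphi_{-6,2}$ is checked to be a well-defined $W(D_4)$-invariant form of index $2$. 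This step is routine bookkeeping of weights, indices, and transformation characters.

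Step (ii) is the heart of the argument, organized by the index grading $B=\bigoplus_{m\geqslant 0}B_m$ with $B_0=M_*$. First I would show that $\varphi_{0,1},\varphi_{-2,1},\varphi_{-4,1},\omega_{-4,1}$ form an $M_*$-basis of the index-$1$ module $B_1$: they carry the four required weights $0,-2,-4,-4$, and they are $M_*$-independent because forms of distinct weight are automatically independent, while in weight $-4$ the independence of $\varphi_{-4,1}$ and $\omega_{-4,1}$ follows from the fact that one is invariant and the other anti-invariant under odd sign changes. Matching the (weight-graded) Hilbert series of the free rank-$4$ module $B_1$ with that of their $M_*$-span then upgrades independence to a basis. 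Next, at index $2$ the only decomposables of weight $-6$ are $\varphi_{-2,1}\varphi_{-4,1}$, $\varphi_{-2,1}\omega_{-4,1}$ and $M_*^+$-multiples of lower index-$2$ products; I would show that $\varphi_{-6,2}$ lies outside their $M_*$-span, i.e.\ that $\varphi_{-6,2}$ is genuinely indecomposable, by comparing a suitable Fourier coefficient (e.g.\ the $q^0$-term as a $W(D_4)$-invariant Laurent polynomial, or the first nontrivial $q$-coefficient). Since Wirthm\"uller guarantees exactly five generators in precisely these bidegrees, this forces the five explicit forms to be a minimal, hence free, generating set, so $B=M_*[\varphi_{0,1},\varphi_{-2,1},\varphi_{-4,1},\varphi_{-6,2},\omega_{-4,1}]$. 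The main obstacle is exactly this indecomposability check for the index-$2$ generator $\varphi_{-6,2}$: confirming that no genuinely new form appears at higher index and that $\varphi_{-6,2}$ is not a polynomial in the index-$1$ generators is what the whole theorem rests on, and it must be done by explicit expansion rather than by a formal dimension count.

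Finally, the $D_3$ statement follows by the specialization $z_4=0$, which is an $M_*$-algebra homomorphism $J^{w,W}_{*,*}(D_4)\to J^{w,W}_{*,*}(D_3)$. The restrictions of $\varphi_{0,1},\varphi_{-2,1},\varphi_{-4,1}$ supply three generators, while $\omega_{-4,1}|_{z_4=0}=0$; here the anti-invariant generator is recovered instead from the normal derivative, using the Jacobi derivative formula $\partial_{z}\vartheta(\tau,z)|_{z=0}=2\pi\eta^3(\tau)$, which gives $\partial_{z_4}\omega_{-4,1}|_{z_4=0}\propto\omega_{-3,1}$. The form $\varphi_{-6,2}$ becomes decomposable upon restriction (consistent with $D_3=A_3$ needing only $3+1=4$ generators), and the same Nakayama/Hilbert-series argument in one fewer elliptic variable yields $J^{w,W}_{*,*}(D_3)=M_*[\varphi_{0,1}|_{z_4=0},\varphi_{-2,1}|_{z_4=0},\varphi_{-4,1}|_{z_4=0},\omega_{-3,1}]$.
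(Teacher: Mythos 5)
The paper itself does not prove Theorem \ref{TheoremD4}: it is quoted from \cite{AG}, so your proposal has to stand on its own, and it contains one genuine gap, in your step (i). You construct $\varphi_{0,1}$ and $\varphi_{-2,1}$ as symmetrized products of $\phi_{0,1}(\tau,z_i)$ and $\phi_{-2,1}(\tau,z_i)$ ``as in Example \ref{index2}'' and assert these have index $1$. They do not: such products are Jacobi forms of index $1$ for $A_1^{\oplus 4}$, and since $D_4(2)<A_1^{\oplus 4}$ they have index $2$ for $D_4$ --- Example \ref{index2} states exactly this, and indeed this symmetrization is how the \emph{index-two} generator $\varphi_{-6,2}$ is built. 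So your list of candidates has no forms at all in the bidegrees $(0,1)$ and $(-2,1)$, and the matching argument cannot start. Moreover, for $D_4$ this is not a routine repair: the obvious alternative, producing $\varphi_{-2,1}$ as $H_{-4}(\varphi_{-4,1})$, fails because (as the paper notes right after the theorem) $H_{-4}(\varphi_{-4,1})$ and $H_{-4}(\omega_{-4,1})$ vanish identically for $D_4$, so $H_{-4}$ annihilates every weight $-4$, index-one form. The construction actually used takes $\varphi_{-2,1}=H_{-4}(\varphi_{-4,1})$ on $D_n$ with $n>4$ and restricts to $D_4$, and then sets $\varphi_{0,1}=4H_{-2}(\varphi_{-2,1})+\tfrac{1}{3}E_4\varphi_{-4,1}$. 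This gap also propagates to your $D_3$ statement, whose first three generators are restrictions of the very forms you have not constructed.

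A secondary flaw: ``forms of distinct weight are automatically independent over $M_*$'' is false --- nothing in that remark rules out, say, $\varphi_{0,1}=cE_4\varphi_{-4,1}$. Independence of the index-one forms must be checked from Fourier coefficients: the matrix of $q^0$-terms of $\varphi_{0,1},\varphi_{-2,1},\varphi_{-4,1},\omega_{-4,1}$ with respect to the orbit sums $Q_0, P_1, P_{\frac{4}{4}}^{+}, P_{\frac{4}{4}}^{-}$ is invertible, and a weight-splitting and $\Delta$-division induction then upgrades this to $M_*$-independence. With these two repairs, the rest of your skeleton is sound: taking Wirthm\"uller's theorem (with its table of bidegrees $(0,1),(-2,1),(-4,1),(-4,1),(-6,2)$) as a black box, showing the four index-one forms are an $M_*$-basis of the index-one module, showing $\varphi_{-6,2}$ is indecomposable (its $Q_0$-coefficient $-320$ cannot come from $\varphi_{-2,1}\varphi_{-4,1}$ or $\varphi_{-2,1}\omega_{-4,1}$, whose $q^0$-terms have no $Q_0$), and concluding by graded Nakayama plus Hilbert series. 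Note, however, that this is a genuinely different route from the proof in \cite{AG}, which constructs all generators explicitly and proves polynomiality by a self-contained induction without invoking Wirthm\"uller --- providing an independent, constructive verification of Wirthm\"uller's theorem for the $D_n$-series is one of the aims of that paper, so your argument buys brevity at the cost of resting on the very result those papers set out to reprove.
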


The forms $\varphi_{-4,1}$ and $\omega_{-4,1}$ here are exactly the same as in Example \ref{ThetaD4}. Construction of $\varphi_{-6,2}$ is described in Example \ref{index2}. Let us note that for the lattice $D_4$ the images of both $\varphi_{-4,1}$ and $\omega_{-4,1}$ under the differential operator $H_{-4}$ are identically equal to zero (by direct computation). However, for $D_n$ with $n\neq 4$
$$ H_{-4}(\varphi_{-4,1})=\varphi_{-2,1}\neq 0,$$
while $H_{-4}(\omega_{-4,1})$ is still equal to zero; the form $\varphi_{-2,1}\neq 0$ for $D_4$ can be obtained from $\varphi_{-2,1}$ for $D_n$ with $n>4$ by restriction on $D_4$ (for details see \cite{AG}). The last form $\varphi_{0,1}$ can be obtained, for example, as 
$$\varphi_{0,1}=4H_{-2}(\varphi_{-2,1})+\frac{1}{3}E_4\varphi_{-4,1}.$$
We add the last summand here because of some technical details (see \cite{AG}).

\begin{remark}\label{change}
In further constructions we also use representation of $\varphi_{-4,1}$ and $\omega_{-4,1}$ in terms of $\varphi_{-4,1}^{(2)}$ and $\varphi_{-4,1}^{(3)}$. Let us note that the transformation between these two pairs of forms is invertible. So, the form $\varphi \in J^{w, W}_{*,*}(D_4)$ can also be uniquely represented as a polynomial in $\varphi_{0,1}$, $\varphi_{-2,1}$, $\varphi_{-4,1}^{(2)}$, $\varphi_{-6,2}$ and $\varphi_{-4,1}^{(3)}$.
\end{remark}

\section{Invariant Fourier coefficients}

As it was mentioned in Definition \ref{Jacobi}, a weak Jacobi form has a Fourier expansion of the following type:
$$\varphi(\tau,\mathfrak{z})=\sum_{l\in L^{\vee}}\sum_{n\geqslant 0} a(n, l)q^n \zeta^l.$$
Let $L$ be a lattice generated by a root system. Its Weyl group acts on elements of $L$ and $L^{\vee}$. So, if any two vectors $l_1$ and $l_2$ belong to the same orbit under the action of the Weyl group, then any $W(L)$-invariant Jacobi form satisfies $a(n, l_1) = a(n, l_2)$.
Consequently, each $q^n$-coefficient can be represented as a sum of $W(L)$-invariant polynomials in $\zeta^l$. For the lattice $D_4$ we introduce the following polynomials: 
$$Q_0=1 \quad \text{corresponding to the orbit of 0},$$
$$P_1=\sum_{j=1}^4 (\zeta_j+\zeta_j^{-1})  \quad \text{corresponding to the orbits of} \; \varepsilon_1 \; \text{and} \; -\varepsilon_1,$$
$$P_{\frac{4}{4}}^{+}=\sum_{\{\text{even number of +}\}}\zeta_1^{\pm\frac{1}{2}}\zeta_2^{\pm\frac{1}{2}}\zeta_3^{\pm\frac{1}{2}}\zeta_4^{\pm\frac{1}{2}}  \quad  \text{for the orbit of} \quad \frac{1}{2}(\varepsilon_1+\varepsilon_2+\varepsilon_3+\varepsilon_4),$$
$$P_{\frac{4}{4}}^{-}=\sum_{\{\text{odd number of +}\}}\zeta_1^{\pm\frac{1}{2}}\zeta_2^{\pm\frac{1}{2}}\zeta_3^{\pm\frac{1}{2}}\zeta_4^{\pm\frac{1}{2}}  \quad \text{for the orbit of} \quad \frac{1}{2}(\varepsilon_1+\varepsilon_2+\varepsilon_3-\varepsilon_4),$$
$$P_{\frac{4}{4}}=P_{\frac{4}{4}}^{+}+P_{\frac{4}{4}}^{-},$$
$$Q_2=\sum_{1\leqslant i<j\leqslant 4} \zeta_i^{\pm1}\zeta_j^{\pm1}  \quad \text{for the orbit of roots} \quad \pm\varepsilon_1\pm\varepsilon_2,$$
$$P_3 = \sum_{1\leqslant i<j<k \leqslant 4} \zeta_i^{\pm1}\zeta_j^{\pm1}\zeta_k^{\pm 1}  \quad\text{for the orbit of elements of the type} \; \pm \varepsilon_1 \pm \varepsilon_2 \pm \varepsilon_3,$$
$$Q_4^1 = \sum_{1\leqslant i<j<k<l \leqslant 4} \zeta_i^{\pm1}\zeta_j^{\pm1}\zeta_k^{\pm1}\zeta_l^{\pm1} \; \text{for the orbit of elements of the type} \; \sum_{i=1}^4 \pm \varepsilon_i,$$
$$Q_4^2=\sum_{j=1}^4 (\zeta^2_j+\zeta_j^{-2}) \quad  \text{corresponding to the orbits of} \quad 2\varepsilon_1 \; \text{and} \; -2\varepsilon_1,$$
$$Q_4=Q_4^1+Q_4^2,$$
$$P_{\frac{12}{4}}  \quad\text{sum over the orbits of elements of the type} \quad \frac{1}{2}(\pm 3\varepsilon_1 \pm \varepsilon_2 \pm \varepsilon_3 \pm \varepsilon_4),$$
$$\frac{1}{2}(\pm \varepsilon_1 \pm 3\varepsilon_2 \pm \varepsilon_3 \pm \varepsilon_4), \quad \frac{1}{2}(\pm \varepsilon_1 \pm \varepsilon_2 \pm 3\varepsilon_3 \pm \varepsilon_4), \quad \frac{1}{2}(\pm \varepsilon_1 \pm \varepsilon_2 \pm \varepsilon_3 \pm 3\varepsilon_4).$$

\begin{remark}
Following the tradition, $Q$ corresponds to the orbits of elements belonging to the lattice and $P$ corresponds to the orbits of the weight vectors. 
The subscripts of $P$ and $Q$ are equal to the length of each element of the orbit. We use different subscripts for polynomials $P_1$ and $P_{\frac{4}{4}}$ which represent different orbits. This separation is correct because in the general case of the lattice $D_n$ the dual lattice $D_n^{\vee}$ contains the vector $\frac{1}{2}(\varepsilon_1+\ldots+\varepsilon_n)$ of the length $\frac{n}{4}$. The notations $P_3$ and $P_{\frac{12}{4}}$ are similar. In the case of $Q_4^1$ and $Q_4^2$ this way to distinguish the orbits does not work, and we use superscripts instead.
\end{remark}

One can prove by direct computation the following representations of generators of the algebra of weak $W(D_4)$-invariant Jacobi forms, using their explicit construction from Theorem \ref{TheoremD4}:
$$\varphi_{0,1} = 32Q_0+P_{\frac{4}{4}}+q\cdot(\ldots),$$
$$\varphi_{-2,1} = 24Q_0-P_1-P_{\frac{4}{4}}+q\cdot(\ldots),$$
$$\varphi_{-4,1} = -2P_1+P_{\frac{4}{4}}+q\cdot(\ldots),$$
$$\omega_{-4,1} = P^+_{\frac{4}{4}}-P^-_{\frac{4}{4}}+q\cdot(\ldots),$$
$$\varphi_{-6,2} = -320Q_0+112P_1 - 32Q_2+4P_3+4Q_4^1+q\cdot(\ldots).$$

Also, it follows from the Example \ref{ThetaD4} that 
$$\varphi^{(2)}_{-4,1} = -P_1+P_{\frac{4}{4}}^+ + q\cdot(\ldots);$$
$$\varphi^{(3)}_{-4,1} = P_1-P_{\frac{4}{4}}^- + q\cdot(\ldots).$$

\begin{remark}
The polynomials $Q_4^2$ and $P_{\frac{12}{4}}$ defined above are not contained in $q^0$-coefficients of $W(D_4)$-invariant Jacobi forms, but they appear below in the construction of $O(D_4)$-invariant generators.
\end{remark}

\section{Construction of generators}

We mark weak Jacobi forms for the lattice  $F_4(2)$ (that is, $O(D_4)$-invariant forms) with the superscript $F_4$. However, to shorten notation we omit this label for $W(D_4)$-invariant Jacobi forms. The main result of the paper is the following theorem.

\begin{theorem}\label{MainTheorem}
The ring of weak Jacobi forms of even index for the lattice $F_4$, which are invariant under the action of the Weyl group $W(F_4)$, has a structure of a polynomial algebra over the ring of modular forms. Namely,
$$J_{*, 2*}^{w, W}(F_4)\simeq J_{*, *}^{w, W}(F_4(2))=M_*[\varphi^{F_4}_{0,1}, \varphi^{F_4}_{-2,1}, \varphi^{F_4}_{-6,2}, \varphi^{F_4}_{-8,2}, \varphi^{F_4}_{-12,3}]\simeq J_{*, *}^{w, O}(D_4).$$
\end{theorem}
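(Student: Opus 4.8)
The plan is to reduce everything to an invariant-theory computation that has already been set up in the text. We have $J^{w,W}_{*,*}(F_4(2))\simeq J^{w,O}_{*,*}(D_4)$, and an $O(D_4)$-invariant form is precisely a $W(D_4)$-invariant form that is in addition invariant under the triality group $S_3\simeq O(D_4)/W(D_4)$. Since Theorem~\ref{TheoremD4} together with Remark~\ref{change} presents $J^{w,W}_{*,*}(D_4)$ as a \emph{polynomial} algebra over $M_*$, the whole statement becomes the computation of $\bigl(J^{w,W}_{*,*}(D_4)\bigr)^{S_3}$. The guiding principle is Chevalley--Shephard--Todd: if $S_3$ acts by graded $M_*$-algebra automorphisms that are linear on the space of indecomposable generators, and if that representation is a sum of trivial representations and copies of the $2$-dimensional reflection (standard) representation, then the invariant subring is again polynomial and one can read off the bidegrees of the basic invariants.

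The first genuine step is therefore to pin down the $S_3$-action on a generating set. The indecomposable generators over $M_*$ sit in bidegrees $(0,1),(-2,1),(-4,1),(-4,1),(-6,2)$; the two generators of bidegree $(-4,1)$ are $\varphi_{-4,1}$ and $\omega_{-4,1}$ (equivalently $\varphi^{(2)}_{-4,1},\varphi^{(3)}_{-4,1}$), while the spaces in the other three bidegrees are one-dimensional. I would realize triality as an explicit orthogonal transformation $g\in O(D_4)=W(F_4)$ and compute its effect on each generator through the transformation law of the theta-quotients, tracking the $\eta$- and theta-multipliers. The expected and essential outcome is that $\{\varphi^{(2)}_{-4,1},\ \varphi^{(3)}_{-4,1},\ -\omega_{-4,1}\}$ is a triple of forms summing to zero that $S_3$ permutes (so it is a copy of the standard representation), while the three one-dimensional pieces carry the trivial character; the sign character is excluded here because the theorem posits honest generators in bidegrees $(0,1),(-2,1),(-6,2)$, which forces them to be genuine invariants. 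After choosing an $S_3$-stable splitting of the indecomposables (available by averaging over $S_3$, since $6$ is invertible and $S_3$ acts trivially on $M_*$), the generating representation $V_0$ is $3\,\mathrm{triv}\oplus\mathrm{std}$, on which every transposition fixes a hyperplane, i.e. $S_3$ acts as a reflection group.

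The conclusion is then classical. The trivial summands supply three linear invariants in bidegrees $(0,1),(-2,1),(-6,2)$, which I identify with $\varphi^{F_4}_{0,1},\varphi^{F_4}_{-2,1},\varphi^{F_4}_{-6,2}$ (concretely the triality-symmetric combinations; e.g. the weight-$0$ index-$1$ invariant is $\varphi_{0,1}-\tfrac13 E_4\varphi_{-4,1}$, whose $q^0$-term weights the short-root orbit sums $P_1$ and $P_{\frac{4}{4}}$ equally, as $W(F_4)$-invariance demands). The standard summand, spanned by forms of bidegree $(-4,1)$, has invariant ring $M_*[e_2,e_3]$ with $e_2=ab+bc+ca$ and $e_3=abc$ for $(a,b,c)=(\varphi^{(2)}_{-4,1},\varphi^{(3)}_{-4,1},-\omega_{-4,1})$; as $S_3$ is the reflection group of type $A_2$ with basic degrees $2$ and $3$, these invariants are algebraically independent and the first invariants appear in degrees $2$ and $3$, i.e. bidegrees $(-8,2)$ and $(-12,3)$, so $\varphi^{F_4}_{-8,2}\propto e_2$ and $\varphi^{F_4}_{-12,3}\propto e_3$. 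Since $\bigl(\mathrm{Sym}_{M_*}V_0\bigr)^{S_3}=M_*\otimes_{\mathbb{C}}\bigl(\mathrm{Sym}_{\mathbb{C}}V_0\bigr)^{S_3}$, Chevalley--Shephard--Todd makes the full invariant ring the free polynomial algebra on these five forms, which is exactly the asserted presentation; in particular algebraic independence over $M_*$ comes for free from the reflection-group structure.

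The main obstacle is the middle step. Triality is neither a coordinate permutation nor a sign change, but the exotic order-$3$ outer symmetry of $D_4$ mixing the vector and the two spinor representations, so its action on the generators has to be extracted from the behaviour of $\vartheta_{D_4},\vartheta^{(2)}_{D_4},\vartheta^{(3)}_{D_4}$ under the corresponding lattice automorphism; this is where the real work lies, and establishing that the relevant triple sums to zero and is cyclically permuted is the crux. Once the decomposition $3\,\mathrm{triv}\oplus\mathrm{std}$ is confirmed, everything else is formal. As an independent safeguard (and an alternative to invoking Chevalley--Shephard--Todd) one can compare Hilbert--Poincar\'e series: the bigraded series of $M_*[\varphi^{F_4}_{0,1},\varphi^{F_4}_{-2,1},\varphi^{F_4}_{-6,2},\varphi^{F_4}_{-8,2},\varphi^{F_4}_{-12,3}]$ must agree with the Molien-type series of $S_3$-invariants computed from the known series of $J^{w,W}_{*,*}(D_4)$, and the algebraic independence over $M_*$ can be checked directly on leading Fourier coefficients.
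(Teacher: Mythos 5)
Your reduction to computing $\bigl(J^{w,W}_{*,*}(D_4)\bigr)^{S_3}$ is sound in outline, and your treatment of the two-dimensional piece coincides with what the paper does in \hyperref[subsec:Chevalley]{\S 4.1}. The genuine gap is your determination of the $S_3$-character on the three one-dimensional indecomposable pieces in bidegrees $(0,1)$, $(-2,1)$, $(-6,2)$: the stated reason --- ``the sign character is excluded because the theorem posits honest generators in these bidegrees'' --- is circular, since it invokes the theorem being proved. Nor is this a harmless formality. If even one of those pieces carried the sign character, the invariant algebra would fail to be polynomial: for a representation $\mathrm{sgn}\oplus\mathrm{std}$ the invariant ring needs four generators (the square of the sign variable, the two basic invariants of $\mathrm{std}$, and the product of the sign variable with the cubic anti-invariant of $\mathrm{std}$) subject to one relation, and this defect would propagate to the whole ring. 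So the trivial character is exactly what must be proved, and your proposed method --- reading the action off ``the transformation law of the theta-quotients'' --- cannot deliver it, because $\varphi_{0,1}$, $\varphi_{-2,1}$, $\varphi_{-6,2}$ are not theta quotients: they are built by restriction from $D_n$ with $n>4$, by differential operators, and by averaging. This is precisely the content the paper supplies by explicit construction: applying $H_{-8}$ to $\varphi^{F_4}_{-8,2}$ yields an invariant equal to $-\tfrac34\varphi_{-6,2}-\varphi_{-2,1}\varphi_{-4,1}$, whose $\varphi_{-6,2}$-component is nonzero; applying $H_{-6}$ and subtracting $5E_4\varphi^{F_4}_{-8,2}$ shows $\varphi_{-2,1}^2$ is invariant, and the $q^0$-term $24Q_0-P_1-P_{\frac{4}{4}}$ together with Statement~\ref{Orbits} upgrades this to invariance of $\varphi_{-2,1}$ itself; a final application of $H_{-2}$ settles bidegree $(0,1)$. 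Note also that your Molien-series ``safeguard'' does not evade the gap: computing that series requires the graded character of the $S_3$-action, i.e.\ the very data in question.

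A second, smaller error: by Statement~\ref{lemF_4} the triple $(a,b,c)=(\varphi^{(2)}_{-4,1},\varphi^{(3)}_{-4,1},-\omega_{-4,1})$ is \emph{not} permuted by $S_3$; the transpositions act by a permutation composed with an overall sign, e.g.\ $f\colon(a,b,c)\mapsto(-b,-a,-c)$. The representation is still isomorphic to the standard one (as $\mathrm{std}\otimes\mathrm{sgn}\simeq\mathrm{std}$), so your degrees $2$ and $3$ are correct, but your explicit cubic generator is not: $e_3=abc$ is \emph{anti}-invariant under the true action, and the actual cubic invariant is the Vandermonde product $(a-b)(b-c)(c-a)$, which up to a scalar is the paper's $8\varphi^{F_4}_{-12,3}=\varphi_{-4,1}(9\omega_{-4,1}^2-\varphi_{-4,1}^2)$; your $e_2$ is fine and equals $-\varphi^{F_4}_{-8,2}$. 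Were the character gap repaired by such computations, your route would be a genuine structural improvement on the paper's: an $S_3$-equivariant Maschke splitting of the indecomposables gives $\bigl(\mathrm{Sym}_{M_*}(M_*\otimes V_0)\bigr)^{S_3}=M_*\otimes_{\mathbb{C}}(\mathrm{Sym}_{\mathbb{C}}V_0)^{S_3}$ with $V_0\simeq 3\,\mathrm{triv}\oplus\mathrm{std}$, which yields polynomiality, generation, and algebraic independence simultaneously, whereas the paper proves these by hand in Lemmas~\ref{AlgIndep} and~\ref{Generate} (elimination of variables plus restriction to $z_4=0$, where $\omega_{-4,1}$ vanishes and the problem reduces to $D_3$).
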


\begin{remark}
A first idea of the proof is to construct the generators of the algebra of weak $W(F_4)$-invariant Jacobi forms as the averages of the generators of $J_{*,*}^{w,W}(D_4)$ over the group $W(F_4)/W(D_4) \simeq S_3$. However, a straightforward computation proves that the average of the form $\varphi_{-4,1}$ is identically equal to zero, and this is a stumbling block in the construction, because this form is central in the case of the lattice $D_4$. Nevertheless, we construct generators of $J_{*,*}^{w,W}(F_4)$ by use of generators of $J_{*,*}^{w,W}(D_4)$, but this construction is a little bit tricky than averaging over $S_3$. 
 \end{remark}

The Weyl group $W(F_4)$ is generated by reflections in $\pm \varepsilon_i\pm \varepsilon_j$, $\pm \varepsilon_i$ and $\frac{1}{2}(\pm \varepsilon_1 \pm \varepsilon_2 \pm \varepsilon_3 \pm \varepsilon_4)$, while the Weyl group $W(D_4)$ is generated by reflections only in $\pm \varepsilon_i\pm \varepsilon_j$. 
Suppose that we have constructed generators in the case of the lattice $F_4$. Let us choose  one representative of each type of vectors $\pm \varepsilon_i$ (this corresponds to the sign changes of any number of $\mathfrak{z}$-coordinates), $\frac{1}{2}(\pm \varepsilon_1 \pm \varepsilon_2 \pm \varepsilon_3 \pm \varepsilon_4)$ with odd number of $+$, $\frac{1}{2}(\pm \varepsilon_1 \pm \varepsilon_2 \pm \varepsilon_3 \pm \varepsilon_4)$ with even number of $+$, and call reflections in them by $f$, $g$, $h$, respectively.
Because of $W(D_4)$-invariance of generators for $D_4$
it is enough to check that constructed generators for $F_4$ are invariant under these reflections.

\begin{statement}\label{Orbits}
The reflections $f$, $g$ and $h$ act on polynomials $P_1$, $P_{\frac{4}{4}}^+$ and $P_{\frac{4}{4}}^-$ as follows:
$$
\begin{aligned}
f: \quad & P_{\frac{4}{4}}^{-} \mapsto P_{\frac{4}{4}}^{+}, \quad P_{\frac{4}{4}}^{+} \mapsto P_{\frac{4}{4}}^{-} \quad P_1 \mapsto P_1;
\end{aligned}
$$
$$
\begin{aligned}
g: \quad & P_{\frac{4}{4}}^- \mapsto P_1, \quad P_{\frac{4}{4}}^+ \mapsto P_{\frac{4}{4}}^+, \quad P_1 \mapsto P_{\frac{4}{4}}^-;
\end{aligned}
$$
$$
\begin{aligned}
h: \quad & P_{\frac{4}{4}}^- \mapsto P_{\frac{4}{4}}^-, \quad P_{\frac{4}{4}}^+ \mapsto P_1, \quad P_1 \mapsto P_{\frac{4}{4}}^+.
\end{aligned}
$$
\end{statement}
\begin{proof}
Direct calculation.
\end{proof}

\begin{statement}\label{lemF_4}
The reflections $f$, $g$ and $h$ act on Jacobi forms $\varphi_{-4,1}^{(2)}$ and $\varphi_{-4,1}^{(3)}$ as follows:
$$ f: \varphi_{-4,1}^{(2)}\mapsto -\varphi_{-4,1}^{(3)}, \quad \varphi_{-4,1}^{(3)} \mapsto  -\varphi_{-4,1}^{(2)};$$
$$ g: \varphi_{-4,1}^{(2)}\mapsto \varphi_{-4,1}^{(2)}+\varphi_{-4,1}^{(3)}, \quad \varphi_{-4,1}^{(3)} \mapsto -\varphi_{-4,1}^{(3)};$$
$$ h: \varphi_{-4,1}^{(2)}\mapsto -\varphi_{-4,1}^{(2)}, \quad \varphi_{-4,1}^{(3)} \mapsto \varphi_{-4,1}^{(2)}+\varphi_{-4,1}^{(3)}.$$
\end{statement}
\begin{proof} Like in the previous statement, the proof amounts to a direct calculation due to the explicit expressions of the Jacobi forms $\varphi_{-4,1}^{(2)}$ and $\varphi_{-4,1}^{(3)}$ in terms of $\vartheta^{(2)}$ and $\vartheta^{(3)}$ given in Example \ref{ThetaD4}. 
\end{proof}

\subsection{Construction of $\varphi_{-12, 3}^{F_4}$ and $\varphi_{-8, 2}^{F_4}$}
\label{subsec:Chevalley}

As it was mentioned in Definition \ref{sysF4}, the Weyl group $W(F_4)$ is the semidirect product of $W(D_4)$ and $S_3$. 
Realization of the involutions $f$, $g$ and $h$ as transpositions in $S_3$ defines the following matrix representation of $S_3$:
$$f=\left(\begin{matrix}
0 &-1\\
-1 & 0
\end{matrix}\right), \;
g=\left(\begin{matrix}
1 & 1\\
0 & -1
\end{matrix}\right),\;
h=\left(\begin{matrix}
-1 & 0\\
1 & 1
\end{matrix}\right).$$
Hence, by the Chevalley theorem over $\mathbb{C}$ (see e.g. \cite[V, \S 5]{Bur}), the subalgebra of invariants of algebra of polynomials in  $\varphi_{-4,1}^{(2)}$ and $\varphi_{-4,1}^{(3)}$ is freely generated by two polynomials of degrees $2$ and $3$, respectively. These polynomials are weak $W(F_4)$-invariant Jacobi forms $\varphi_{-12, 3}^{F_4}$ and $\varphi_{-8, 2}^{F_4}$ automatically. Let us find them.

The form $\varphi_{-12, 3}^{F_4}$ is equal to 
$$P(\varphi_{-4,1}^{(2)}, \varphi_{-4,1}^{(3)}) = a(\varphi_{-4,1}^{(2)})^3+b(\varphi_{-4,1}^{(2)})^2\varphi_{-4,1}^{(3)}+c\varphi_{-4,1}^{(2)}(\varphi_{-4,1}^{(3)})^2+d(\varphi_{-4,1}^{(3)})^3$$
with some complex coefficients, due to the weight and index considerations. A direct calculation of $S_3$ action shows that up to a constant factor there is only one such invariant Jacobi form:
$$2\varphi_{-12, 3}^{F_4} = 2(\varphi_{-4,1}^{(2)})^3+3(\varphi_{-4,1}^{(2)})^2\varphi_{-4,1}^{(3)}-3\varphi_{-4,1}^{(2)}(\varphi_{-4,1}^{(3)})^2-2(\varphi_{-4,1}^{(3)})^3.$$
Using the fact that $\varphi_{-4,1}=\varphi_{-4,1}^{(2)} - \varphi_{-4,1}^{(3)}$ and $\omega_{-4,1} = \varphi_{-4,1}^{(2)}+\varphi_{-4,1}^{(3)},$ we obtain
$$8\varphi_{-12, 3}^{F_4} = 9 \varphi_{-4,1}\omega_{-4,1}^2  - \varphi_{-4,1}^3 = \varphi_{-4,1}(9\omega_{-4,1}^2 - \varphi_{-4,1}^2).$$
As for $\varphi_{-8, 2}^{F_4}$, it can be written as
$$Q(\varphi_{-4,1}^{(2)}, \varphi_{-4,1}^{(3)}) = a (\varphi_{-4,1}^{(2)})^2+b\varphi_{-4,1}^{(2)}\varphi_{-4,1}^{(3)}+c(\varphi_{-4,1}^{(3)})^2$$ 
with complex coefficients. Again, a direct calculation proves that up to a constant factor there is only one invariant form
$$\varphi_{-8, 2}^{F_4} = (\varphi_{-4,1}^{(2)})^2+\varphi_{-4,1}^{(2)}\varphi_{-4,1}^{(3)}+(\varphi_{-4,1}^{(3)})^2.$$
Using the fact that $\varphi_{-4,1}=\varphi_{-4,1}^{(2)} - \varphi_{-4,1}^{(3)}$ and $\omega_{-4,1} = \varphi_{-4,1}^{(2)}+\varphi_{-4,1}^{(3)},$ we get
$$4\varphi_{-8, 2}^{F_4} =3\omega_{-4,1}^2+\varphi_{-4,1}^2.$$

\subsection{Products of some invariant polynomials}\label{subsec:products}

For the further computations we need to find the $q^0$-terms of all $W(D_4)$-invariant forms of index 2. The $q^0$-term of $\varphi_{-6,2}$ is known and the $q^0$-terms of generators of index 1 are linear combinations of $Q_0$, $P_1$, $P_{\frac{4}{4}}^{+}$ and $P_{\frac{4}{4}}^{-}$.
Hence, we need to calculate the pairwise products of these four polynomials, with the exception that we do not need to know $P_1 \cdot P_{\frac{4}{4}}^{+}$ and $P_1 \cdot P_{\frac{4}{4}}^{-}$ separately, only their sum $P_1 \cdot P_{\frac{4}{4}}$ is required. Obviously, the multiplication by $Q_0$ is trivial. The other products are the following:
$$(P_{\frac{4}{4}}^{+})^2=8Q_0+2Q_2+Q_4^{1,+},$$
$$(P_{\frac{4}{4}}^{-})^2=8Q_0+2Q_2+Q_4^{1,-},$$
where $Q_4^{1,-}$ and $Q_4^{1,+}$ correspond to the summands with odd and even number of $+$ signs, respectively, 
$$P_{\frac{4}{4}}^{-}\cdot P_{\frac{4}{4}}^{+}=4P_1+P_3,$$
$$(P_1)^2=8Q_0+2Q_2+Q_4^2,$$
$$P_1 \cdot P_{\frac{4}{4}}=4P_{\frac{4}{4}}+P_{\frac{12}{4}}.$$

Using all these formulas we obtain all pairwise products of generators of index 1 for $D_4$.

\subsection{Construction of $\varphi_{-6, 2}^{F_4}$}

In the case of the lattice $F_4$ the differential operator $H_k$ acts on the Jacobi form $\varphi_{k, m}^{F_4}=\sum_{n=0}^{\infty} \sum_{l \in F_4} a(n, l) q^n \zeta^l$ by
$$
H_{k}(\varphi_{k, m}^{F_4}) = \sum_{n=0}^{\infty} \sum_{l \in F_4}\left(n - \frac{1}{2m}(l,l)\right) a(n, l) q^n \zeta^l +(2k-4)G_2\varphi_{k, m}^{F_4}.
$$
Let us apply the differential operator $H_{-8}$ to the constructed Jacobi form $\varphi_{-8, 2}^{F_4}$. By use of the above pairwise products  we obtain
$$\varphi_{-8, 2}^{F_4}=\frac{3\omega_{-4,1}^2+\varphi_{-4,1}^2}{4}=24Q_0-4P_1-4P_{\frac{4}{4}}+6Q_2-P_3-P_{\frac{12}{4}}+Q_4+q\cdot(\ldots).$$
Hence, as one can calculate,
$$12H_{-8}(\varphi_{-8, 2}^{F_4}) = 240Q_0-28P_1-28P_{\frac{4}{4}}+24Q_2-P_3-P_{\frac{12}{4}}-2Q_4+q\cdot(\ldots) \neq 0,$$
and we may choose this form as $\varphi_{-6, 2}^{F_4}$. 
Let us express this form in terms of the generators of $J_{*,*}^{w,W}(D_4)$. As we know,
$$J_{-6,2}^{w, W}(D_4) = \langle \varphi_{-6,2}; \varphi_{-2,1}\varphi_{-4,1}; \varphi_{-2,1}\omega_{-4,1} \rangle.$$
However, the form $\varphi_{-2,1}\omega_{-4,1}$ is anti-invariant under the odd number of sign changes of $\mathfrak{z}$-coordinates. Thus, 
$$ \varphi_{-6, 2}^{F_4} = a\varphi_{-6,2} + b\varphi_{-2,1}\varphi_{-4,1}.$$
We know that
$$\varphi_{-6,2} = -320Q_0+112P_1 - 32Q_2+4P_3+4Q_4^1+q\cdot(\ldots);$$
$$\varphi_{-2,1}\varphi_{-4,1} =(24Q_0-P_1-P_{\frac{4}{4}})(-2P_1+P_{\frac{4}{4}})+q\cdot(\ldots)=$$
$$=-56P_1+28P_{\frac{4}{4}}-2P_3+P_{\frac{12}{4}}-Q_4^1+2Q_4^2.$$
Therefore, 
$$
240Q_0-28P_1-28P_{\frac{4}{4}}+24Q_2-P_3-P_{\frac{12}{4}}-2Q_4=$$
$$
a( -320Q_0+112P_1 - 32Q_2+4P_3+4Q_4^1)-b(56P_1-28P_{\frac{4}{4}}+2P_3-P_{\frac{12}{4}}+Q_4^1-2Q_4^2),$$
and $a = -\frac{3}{4}$, $b= -1$. As a result, we obtain
$$-4\varphi_{-6, 2}^{F_4} = 3\varphi_{-6,2} + 4\varphi_{-2,1}\varphi_{-4,1}.$$

\subsection{Construction of $\varphi_{-2, 1}^{F_4}$}

Similar to the previous case, let us apply the differential operator $H_{-6}$ to $\varphi_{-6, 2}^{F_4}$. We get
$$H_{-6}(\varphi_{-6, 2}^{F_4})=$$$$= \frac{1}{12}\left(1920Q_0-140P_1-140P_{\frac{4}{4}}+48Q_2+P_3+P_{\frac{12}{4}}+8Q_4+q\cdot(\ldots)\right).$$
However, in this case 
$$J_{-4,2}^{w, W}(D_4) = \langle E_4\varphi_{-4,1}^2; E_4\omega_{-4,1}^2; \varphi_{-2,1}^2; \varphi_{0,1}\varphi_{-4,1}; \varphi_{0,1}\omega_{-4,1}\rangle.$$
The latter Jacobi form is not invariant  under the odd number of sign changes of $\mathfrak{z}$-coordinates. So, it does not appear in the expression of $H_{-6}(\varphi_{-6, 2}^{F_4})$ as a linear combination of Jacobi forms of weight $-4$ and index 2 for $D_4$. Let us calculate $q^0$-term of other forms. By use of the pairwise products from Section \ref{subsec:products}, we obtain
$$E_4\omega_{-4,1}^2 = 16Q_0-8P_1+4Q_2-2P_3+Q_4^1;$$ 
$$ E_4\varphi_{-4,1}^2 = 48Q_0+8P_1-16P_{\frac{4}{4}}+12Q_2+2P_3-4P_{\frac{12}{4}}+Q_4^1+4Q_4^2;$$
$$\varphi_{-2,1}^2 = 600Q_0-40P_1-40P_{\frac{4}{4}}+6Q_2+2P_3+2P_{\frac{12}{3}}+Q_4;$$
$$ \varphi_{-4,1}\varphi_{0,1} = 16Q_0-56P_1-40P_{\frac{4}{4}}+4Q_2+2P_3-2P_{\frac{12}{4}}+Q_4^1.$$
Suppose that
$H_{-6}(\varphi_{-6, 2}^{F_4})=aE_4\omega_{-4,1}^2+b E_4\varphi_{-4,1}^2+c\varphi_{-2,1}^2+d\varphi_{0,1}\varphi_{-4,1}.$
Then we get a system of linear equations, which has the unique solution $a = \frac{15}{4}$, $b= \frac{3}{4}$, $c=3$, $d=0$. Hence, 
$$H_{-6}(\varphi_{-6, 2}^{F_4})= \frac{15}{4}E_4\omega_{-4,1}^2+\frac{3}{4} E_4\varphi_{-4,1}^2+3\varphi_{-2,1}^2=$$
$$=\frac{5}{4}(3E_4\omega_{-4,1}^2+ E_4\varphi_{-4,1}^2+3\varphi_{-2,1}^2).$$
By subtracting 
$$H_{-6}(\varphi_{-6, 2}^{F_4})-5E_4\varphi_{-8,2}^{F_4}=H_{-6}(\varphi_{-6, 2}^{F_4})-\frac{5}{4}(3E_4\omega_{-4,1}^2+E_4\varphi_{-4,1}^2),$$
we obtain that $0\neq\varphi_{-2,1}^2 \in J_{-4,2}^{w, W}(F_4)$. 

Let us prove now that $\varphi_{-2,1}$ is $W(F_4)$-invariant. By construction it is invariant under the action of $W(D_4)$ and any number of sign changes of $\mathfrak{z}$-coordinates. So, we only need to check invariance under reflections in roots of type $\frac{1}{2}(\pm \varepsilon_1 \pm \varepsilon_2\pm \varepsilon_3 \pm \varepsilon_4)$.
Because of invariance of $\varphi_{-2,1}^2$ under this transformations, the form $\varphi_{-2,1}$ is invariant or anti-invariant. However, the $q^0$-term of $\varphi_{-2,1}$ is equal to $24Q_0-P_1-P_{\frac{4}{4}}$, and it is $W(F_4)$-invariant by Statement \ref{Orbits}.
Therefore $\varphi_{-2,1}$ is $W(F_4)$-invariant.

\subsection{Construction of $\varphi_{0, 1}^{F_4}$}

Finally, the form of weight 0 and index 1 can be obtained as 
$$\varphi_{0,1}^{F_4}=6H_{-2}(\varphi_{-2,1}^{F_4}) = 48Q_0+P_1+P_{\frac{4}{4}} +q\cdot(\ldots).$$
As we know, any Jacobi form of weight 0 and index 1 which is invariant under any number of sign changes of $\mathfrak{z}$-coordinates is a linear combination of $\varphi_{0,1}$ and $E_4\varphi_{-4,1}$. 
Therefore, by comparing the coefficients we obtain the representation
$$2\varphi_{0,1}^{F_4}=3\varphi_{0,1}-E_4\varphi_{-4,1}.$$

\section{Algebraic independence and sufficiency of the constructed Jacobi forms}

Now let us prove that the constructed Jacobi forms for the lattice $F_4$ are indeed generators of the algebra of weak $W(F_4)$-invariant Jacobi forms. 

\begin{lemma}\label{AlgIndep}
Jacobi forms $\varphi_{0,1}^{F_4}$, $\varphi_{-2,1}^{F_4}$, $\varphi_{-6,2}^{F_4}$, $\varphi_{-8,2}^{F_4}$ and $\varphi_{-12,3}^{F_4}$ are algebraically independent over the ring of modular forms.
\end{lemma}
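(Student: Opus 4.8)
The plan is to reduce algebraic independence over the ring $M_*$ to a transcendence-degree count over the fraction field $K:=\mathrm{Frac}(M_*)=\mathbb{C}(E_4,E_6)$. By Theorem \ref{TheoremD4} the ring $J_{*,*}^{w,W}(D_4)=M_*[\varphi_{0,1},\varphi_{-2,1},\varphi_{-4,1},\varphi_{-6,2},\omega_{-4,1}]$ is a genuine polynomial ring, so its five generators are algebraically independent over $M_*$, hence over $K$, and the field $K(\varphi_{0,1},\varphi_{-2,1},\varphi_{-4,1},\varphi_{-6,2},\omega_{-4,1})$ has transcendence degree $5$ over $K$. Since any algebraic relation among the five $F_4$-forms with coefficients in $M_*$ is the same as one with coefficients in $K$ (clear denominators), it suffices to show that $\varphi_{0,1}^{F_4},\varphi_{-2,1}^{F_4},\varphi_{-6,2}^{F_4},\varphi_{-8,2}^{F_4},\varphi_{-12,3}^{F_4}$ are algebraically independent over $K$ inside this polynomial ring.

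First I would assemble from Section 4 the explicit expressions of the five forms in the $D_4$-generators: $2\varphi_{0,1}^{F_4}=3\varphi_{0,1}-E_4\varphi_{-4,1}$, $\varphi_{-2,1}^{F_4}=\varphi_{-2,1}$, $-4\varphi_{-6,2}^{F_4}=3\varphi_{-6,2}+4\varphi_{-2,1}\varphi_{-4,1}$, $4\varphi_{-8,2}^{F_4}=3\omega_{-4,1}^2+\varphi_{-4,1}^2$ and $8\varphi_{-12,3}^{F_4}=\varphi_{-4,1}(9\omega_{-4,1}^2-\varphi_{-4,1}^2)$; note that $\omega_{-4,1}$ enters only through $\omega_{-4,1}^2$, as forced by anti-invariance under odd sign changes. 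I would then apply the Jacobian criterion in characteristic zero: the five forms are algebraically independent over $K$ iff the Jacobian determinant of the substitution $(\varphi_{0,1},\varphi_{-2,1},\varphi_{-4,1},\varphi_{-6,2},\omega_{-4,1})\mapsto(\varphi_{0,1}^{F_4},\varphi_{-2,1}^{F_4},\varphi_{-6,2}^{F_4},\varphi_{-8,2}^{F_4},\varphi_{-12,3}^{F_4})$ is a nonzero element of the polynomial ring. Cofactor expansion is almost triangular: the $\varphi_{0,1}$-column is supported only on the $\varphi_{0,1}^{F_4}$-row, the $\varphi_{-6,2}$-column only on the $\varphi_{-6,2}^{F_4}$-row, and the $\varphi_{-2,1}$-column is killed by $\varphi_{-2,1}^{F_4}=\varphi_{-2,1}$, so the whole determinant reduces to the $2\times 2$ minor in $\varphi_{-4,1},\omega_{-4,1}$ coming from $\varphi_{-8,2}^{F_4}$ and $\varphi_{-12,3}^{F_4}$. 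Up to a nonzero rational constant it equals $\omega_{-4,1}\bigl(\varphi_{-4,1}^2-\omega_{-4,1}^2\bigr)$, which is manifestly nonzero, completing the proof.

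There is no genuine obstacle here: the computation is essentially triangular, and the only substantive point is the non-degeneracy of the $(\varphi_{-8,2}^{F_4},\varphi_{-12,3}^{F_4})$ block in the pair $\varphi_{-4,1},\omega_{-4,1}$, which is precisely where independence could conceivably fail. Equivalently, and perhaps more transparently, one can bypass the determinant by elimination: writing $\omega_{-4,1}^2=(4\varphi_{-8,2}^{F_4}-\varphi_{-4,1}^2)/3$ from the index-$8$ relation and substituting into $8\varphi_{-12,3}^{F_4}=\varphi_{-4,1}(9\omega_{-4,1}^2-\varphi_{-4,1}^2)$ yields $\varphi_{-4,1}^3-3\varphi_{-8,2}^{F_4}\varphi_{-4,1}+2\varphi_{-12,3}^{F_4}=0$. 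Thus $\varphi_{-4,1}$ is a root of the nonzero cubic $T^3-3\varphi_{-8,2}^{F_4}T+2\varphi_{-12,3}^{F_4}$, hence algebraic over $K(\varphi_{-8,2}^{F_4},\varphi_{-12,3}^{F_4})$; then $\omega_{-4,1}^2$, and afterwards $\varphi_{0,1}$ (from $\varphi_{0,1}^{F_4}$), $\varphi_{-2,1}=\varphi_{-2,1}^{F_4}$, and $\varphi_{-6,2}$ (from $\varphi_{-6,2}^{F_4}$) are all algebraic over $K(\varphi_{0,1}^{F_4},\varphi_{-2,1}^{F_4},\varphi_{-6,2}^{F_4},\varphi_{-8,2}^{F_4},\varphi_{-12,3}^{F_4})$. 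Therefore this subfield has transcendence degree $5$ over $K$, which forces the five $F_4$-forms to be algebraically independent over $K$, and hence over $M_*$, as claimed.
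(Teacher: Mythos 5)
Your proof is correct, but it follows a genuinely different route from the paper's. The paper argues by contradiction and elimination: in a hypothetical relation it isolates the top power of $\varphi_{0,1}^{F_4}$, rewrites everything in the $D_4$-generators and invokes Theorem \ref{TheoremD4} to kill the leading coefficient; iterating this for $\varphi_{-6,2}^{F_4}$ and then $\varphi_{-2,1}^{F_4}$ (the same triangular structure you exploit), it reduces to a two-variable relation $U(\varphi_{-8,2}^{F_4},\varphi_{-12,3}^{F_4})\equiv 0$, which is then excluded by restricting to $z_4=0$, where $\omega_{-4,1}$ vanishes, and appealing to the $D_3$ statement of Theorem \ref{TheoremD4}. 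You instead work over $K=\mathrm{Frac}(M_*)$ and settle everything inside the polynomial ring $K[\varphi_{0,1},\varphi_{-2,1},\varphi_{-4,1},\varphi_{-6,2},\omega_{-4,1}]$, either by the Jacobian criterion (your determinant computation, reducing to the $2\times 2$ minor $\omega_{-4,1}(\varphi_{-4,1}^2-\omega_{-4,1}^2)$ up to a nonzero constant, is right) or, equivalently, via the explicit cubic $T^3-3\varphi_{-8,2}^{F_4}T+2\varphi_{-12,3}^{F_4}$ annihilating $\varphi_{-4,1}$ and a transcendence-degree count. Your version buys two things. First, it needs no sublattice restriction and no $D_3$ input. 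Second, it is watertight exactly where the paper is terse: the paper's last step asserts that the restricted relation for $\varphi_{-4,1}\big|_{z_4=0}$ is ``non-trivial'', but the specialization $(x,y)\mapsto(T^2/4,\,-T^3/8)$ has kernel the principal ideal $(x^3-y^2)$, and since $x^3$ and $y^2$ carry the same weight $-24$ and index $6$ the bigrading does not exclude a relation divisible by $x^3-y^2$, which would restrict to the trivial identity $0=0$; closing that case requires the extra observation $(\varphi_{-8,2}^{F_4})^3-(\varphi_{-12,3}^{F_4})^2=\tfrac{27}{64}\,\omega_{-4,1}^2(\omega_{-4,1}^2-\varphi_{-4,1}^2)^2\neq 0$, which your nonvanishing minor (equivalently, the nonvanishing discriminant of your cubic) supplies automatically. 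What the paper's route buys in exchange is elementarity --- no Jacobian criterion or transcendence-degree machinery --- and uniformity with the restriction-to-sublattice technique used throughout \cite{AG} and the rest of the paper.
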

\begin{proof}
Let us recall the expressions of these forms in terms of generators of the algebra of weak $W(D_4)$-invariant Jacobi forms: 
$$2\varphi_{0,1}^{F_4} = 3\varphi_{0,1}-E_4\varphi_{-4,1},$$
$$\varphi_{-2,1}^{F_4} = \varphi_{-2,1},$$
$$-4\varphi_{-6,2}^{F_4} = 3\varphi_{-6,2} + 4\varphi_{-2,1}\varphi_{-4,1},$$
$$4\varphi_{-8,2}^{F_4} = 3\omega_{-4,1}^2+\varphi_{-4,1}^2,$$
$$8\varphi_{-12,3}^{F_4} = \varphi_{-4,1}(9\omega_{-4,1}^2 - \varphi_{-4,1}^2).$$
Suppose that these forms are algebraically dependent. Then
$$U(\varphi_{0,1}^{F_4}, \varphi_{-2,1}^{F_4}, \varphi_{-6,2}^{F_4}, \varphi_{-8,2}^{F_4}, \varphi_{-12,3}^{F_4})$$
for some polynomial $U$ over the ring of modular forms. Consider its monomials containing the maximal degree $k$ of the form $\varphi_{0,1}^{F_4}$. Their sum is equal to
$$(\varphi_{0,1}^{F_4})^kU_1(\varphi_{-2,1}^{F_4}, \varphi_{-6,2}^{F_4}, \varphi_{-8,2}^{F_4}, \varphi_{-12,3}^{F_4})$$
for some polynomial $U_1$.
Now let us set $\varphi_{0,1}^{F_4} = \frac{3}{2}\varphi_{0,1}-\frac{1}{2}E_4\varphi_{-4,1}$. Then the polynomial $U\equiv 0$ can be written as
$$\varphi_{0,1}^kU_1(\varphi_{-2,1}^{F_4}, \varphi_{-6,2}^{F_4}, \varphi_{-8,2}^{F_4}, \varphi_{-12,3}^{F_4})+\varphi_{0,1}^{k-1}U_2(\varphi_{-2,1}^{F_4}, \varphi_{-6,2}^{F_4}, \varphi_{-8,2}^{F_4}, \varphi_{-12,3}^{F_4})+\ldots.$$
Considering this relation as relation for $D_4$ we obtain that $U_1= 0$. Hence, $U$ does not depend on $\varphi_{0,1}^{F_4}$, because of maximality of $k$.

The same argument shows that $U$ does not depend on $\varphi_{-6,2}^{F_4}$, because the Jacobi form $\varphi_{-6,2}$ does not appear in expression of other forms in terms of generators for $D_4$. Then the same argument shows that $U$ does not depend on $\varphi_{-2,1}^{F_4}$.

Thus we need only to check that $\varphi_{-8,2}^{F_4}$ and $\varphi_{-12,3}^{F_4}$ are algebraically independent over the ring of modular forms. Suppose the contrary. Then $U(\varphi_{-8,2}^{F_4}, \varphi_{-12,3}^{F_4})\equiv 0$. Consider the explicit formulas for these forms in terms of $\varphi_{-4,1}$ and $\omega_{-4,1}$, which were obtained in \hyperref[subsec:Chevalley]{\S 4.1}. Setting $z_4=0$ we get the non-trivial relation on $\varphi_{-4,1}^{D_3} = \varphi_{-4,1}\bigg|_{z_4=0}$ for the lattice $D_3$, because $\omega_{-4,1}\bigg|_{z_4=0}=0$. And this is a contradiction to Theorem \ref{TheoremD4}.
\end{proof}

\begin{lemma}\label{Generate}
Any weak $W(D_4)$-invariant Jacobi forms is a polynomial in $\varphi_{0,1}^{F_4}$, $\varphi_{-2,1}^{F_4}$, $\varphi_{-6,2}^{F_4}$, $\varphi_{-8,2}^{F_4}$ and $\varphi_{-12,3}^{F_4}$ over the ring of modular forms. 
\end{lemma}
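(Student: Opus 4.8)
The plan is to realize the ring of weak $W(F_4)$-invariant forms as the $S_3$-invariant subring of $J^{w,W}_{*,*}(D_4)$ and then to compute that subring explicitly from the polynomial structure of Theorem \ref{TheoremD4}. Since $W(D_4)$ is normal in $O(D_4)=W(F_4)$ with quotient $W(F_4)/W(D_4)\simeq S_3$, a weak $W(D_4)$-invariant form is $W(F_4)$-invariant precisely when it is fixed by the induced action of $S_3$, generated by the reflections $f,g,h$. Hence $J^{w,W}_{*,*}(F_4)\simeq J^{w,O}_{*,*}(D_4)=(J^{w,W}_{*,*}(D_4))^{S_3}$, and by Theorem \ref{TheoremD4} the ambient ring is the free polynomial algebra $M_*[\varphi_{0,1},\varphi_{-2,1},\varphi_{-4,1},\varphi_{-6,2},\omega_{-4,1}]$, on which $S_3$ acts $M_*$-linearly since modular forms do not depend on $\mathfrak{z}$.

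Next I would pass to a set of generators adapted to this action. Using Remark \ref{change} I replace $\{\varphi_{-4,1},\omega_{-4,1}\}$ by $\{\varphi_{-4,1}^{(2)},\varphi_{-4,1}^{(3)}\}$, and I replace $\varphi_{0,1}$ and $\varphi_{-6,2}$ by the combinations $\psi_{0,1}:=3\varphi_{0,1}-E_4\varphi_{-4,1}=2\varphi_{0,1}^{F_4}$ and $\psi_{-6,2}:=3\varphi_{-6,2}+4\varphi_{-2,1}\varphi_{-4,1}=-4\varphi_{-6,2}^{F_4}$ constructed in \S 4. Because $\varphi_{0,1}$ and $\varphi_{-6,2}$ each occur in $\psi_{0,1}$ and $\psi_{-6,2}$ with a nonzero scalar coefficient, this is a triangular, hence invertible, automorphism of the polynomial algebra over $M_*$, so
$$J^{w,W}_{*,*}(D_4)=M_*[\psi_{0,1},\varphi_{-2,1},\psi_{-6,2},\varphi_{-4,1}^{(2)},\varphi_{-4,1}^{(3)}].$$
By their construction $\psi_{0,1}$, $\varphi_{-2,1}=\varphi_{-2,1}^{F_4}$ and $\psi_{-6,2}$ are $W(F_4)$-invariant, hence pointwise fixed by $S_3$, whereas by Statement \ref{lemF_4} the group $S_3$ acts on the two-dimensional span of $\varphi_{-4,1}^{(2)},\varphi_{-4,1}^{(3)}$ via the representation recorded in \S\ref{subsec:Chevalley}, and on nothing else.

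The action is therefore block-diagonal: writing $B:=M_*[\psi_{0,1},\varphi_{-2,1},\psi_{-6,2}]$, which is fixed by $S_3$, the ambient ring is $B[\varphi_{-4,1}^{(2)},\varphi_{-4,1}^{(3)}]\cong B\otimes_{\mathbb{C}}\mathbb{C}[\varphi_{-4,1}^{(2)},\varphi_{-4,1}^{(3)}]$ with $S_3$ acting only on the second factor. Taking invariants commutes with a tensor factor carrying the trivial action, so
$$\bigl(J^{w,W}_{*,*}(D_4)\bigr)^{S_3}=B\otimes_{\mathbb{C}}\mathbb{C}[\varphi_{-4,1}^{(2)},\varphi_{-4,1}^{(3)}]^{S_3}.$$
By the Chevalley computation of \S\ref{subsec:Chevalley} the last invariant ring is the free algebra $\mathbb{C}[\varphi_{-8,2}^{F_4},\varphi_{-12,3}^{F_4}]$. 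Combining and rewriting $\psi_{0,1}=2\varphi_{0,1}^{F_4}$, $\psi_{-6,2}=-4\varphi_{-6,2}^{F_4}$ gives $(J^{w,W}_{*,*}(D_4))^{S_3}=M_*[\varphi_{0,1}^{F_4},\varphi_{-2,1}^{F_4},\varphi_{-6,2}^{F_4},\varphi_{-8,2}^{F_4},\varphi_{-12,3}^{F_4}]$, which is exactly the assertion.

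I expect the main point to be the decoupling in the second step: one must verify that after the change of generators the three forms $\psi_{0,1},\varphi_{-2,1},\psi_{-6,2}$ are genuinely $S_3$-fixed (this is precisely the content of the constructions in \S 4, where they were produced as $W(F_4)$-invariant forms) and that the $S_3$-action on $\{\varphi_{-4,1}^{(2)},\varphi_{-4,1}^{(3)}\}$ does not drag in the other generators. Once this is in place the invariants split as a tensor product over $M_*$ and the two-variable Chevalley result of \S\ref{subsec:Chevalley} finishes the computation; everything else is the bookkeeping of an invertible, triangular change of variables.
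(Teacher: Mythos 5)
Your proposal is correct and is essentially the paper's own argument: the paper likewise substitutes $\varphi_{0,1}=\frac{1}{3}(2\varphi_{0,1}^{F_4}+E_4\varphi_{-4,1})$ and the analogous expression for $\varphi_{-6,2}$ into a polynomial representation over the $D_4$ generators, concludes that the coefficient polynomials $U_{\alpha}(\varphi_{-4,1},\omega_{-4,1})$ must themselves be $W(F_4)$-invariant, and finishes with the two-variable Chevalley computation of \S 4.1. Your tensor-product formulation merely makes explicit (via the invertible triangular change of generators and uniqueness of the polynomial representation) the step the paper states in one line, so the two proofs coincide in substance.
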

\begin{proof}
Consider an arbitrary Jacobi form $\Phi_{k, m} \in J_{*,*}^{w, W}(F_4)$. As we know, this form can be expressed as a polynomial in  generators of $J_{*,*}^{w, W}(D_4)$. Thus, for some polynomial $U$ over $\mathbb{C}$
$$\Phi_{k, m} = U(E_4, E_6, \varphi_{0,1}, \varphi_{-2,1}, \varphi_{-4,1}, \varphi_{-6,2}, \omega_{-4,1}).$$
Let us substitute into $U$ the expressions 
$$\varphi_{0,1} = \frac{2\varphi_{0,1}^{F_4}+E_4\varphi_{-4,1}}{3},$$
$$\varphi_{-6,2} = \frac{\varphi_{-6,2}^{F_4}-4\varphi_{-2,1}\varphi_{-4,1}}{3}.$$
Then we obtain that $\Phi_{k, m}$ equals 
$$\sum_{\alpha=(n_1, n_2, n_3, n_4, n_5)} a_{\alpha} U_{\alpha}(\varphi_{-4,1}, \omega_{-4,1}) E_4^{n_1}E_6^{n_2}(\varphi_{0,1}^{F_4})^{n_3}(\varphi_{-2,1}^{F_4})^{n_4}(\varphi_{-6,2}^{F_4})^{n_5},$$
where $\alpha$ is a multi-index, $a_{\alpha}$ are complex numbers, $U_{\alpha}$ is a polynomial in $\varphi_{-4,1}$ and $\omega_{-4,1}$ over $\mathbb{C}$. This representation is correct since $\varphi_{-2,1}^{F_4}=\varphi_{-2,1}$.
By construction, each form
$$E_4^{n_1}E_6^{n_2}(\varphi_{0,1}^{F_4})^{n_3}(\varphi_{-2,1}^{F_4})^{n_4}(\varphi_{-6,2}^{F_4})^{n_5}$$
is invariant under the action of $W(F_4)$. Hence, all $U_{\alpha}$ are also invariant. Then each of them is a polynomial in $\varphi_{-8,2}^{F_4}$ and $\varphi_{-12,3}^{F_4}$, as it has been noticed in \hyperref[subsec:Chevalley]{\S 4.1}.
\end{proof}

Lemma \ref{AlgIndep} and Lemma \ref{Generate} together complete the proof of Theorem \ref{MainTheorem}.

D. Adler \par
\smallskip
International laboratory of mirror symmetry and automorphic forms
\par
NRU HSE, Moscow
\par
\smallskip
{\tt dmitry.v.adler@gmail.com}
\vskip0.5cm

\end{document}